\newtheorem{thm}{Theorem}
\newcommand{\C}{\mathbb{C}}
\newcommand{\F}{\mathbb{F}}
\newcommand{\al}{\alpha}
\newcommand{\A}{\textnormal{\textbf{A}}}
\newcommand{\Span}[1]{\textnormal{span}\{#1\}}
\title{Cyclic Leibniz Algebras}
\author{Kristin Bugg, Allison Hedges, Minji Lee, Daniel Scofield, and S. McKay Sullivan}
\address{Department of Mathematics, North Carolina State University, Raleigh, NC 27695}
\email{kbugg@ncsu.edu, armcalis@ncsu.edu, mlee9@ncsu.edu, dscofie@ncsu.edu, smsulli4@ncsu.edu}
\begin{document}

\begin{abstract}
Leibniz algebras generated by one element, called cyclic, provide simple and illuminating examples of many basic concepts. It is the purpose of this paper to illustrate this fact.
\end{abstract}

\let\thefootnote\relax\footnote{The work of the authors was supported by NSF grant DMS-0943855.}

\maketitle

\section{Introduction}

Since Leibniz algebras were introduced by Loday \cite{Loday:1993} as a non-commutative generalization of Lie algebras, there has been considerable interest in trying to extend Lie algebra results to this new class of algebras. Leibniz algebras that are generated by one element, called cyclic in \cite{Ray:2012}, are especially illuminating for finding examples. It is the purpose of this work to illustrate this theme. We classify the 3-dimensional cyclic algebras by finding a family and two other cases, one of which is nilpotent. We use this classification to easily find all maximal subalgebras, Frattini subalgebras, Engel subalgebras, and Cartan subalgebras. Engel's theorem holds for Leibniz algebras, as does the characterization of nilpotency in terms of all maximal subalgebras being ideals. We use our 3-dimensional cyclic algebras to show how varying the aforementioned concepts changes the results. We compute the derivation algebras and Killing forms of our examples and show that Cartan's criterion does not hold in one direction. 

\section{Basic Concepts} \label{basics}

We summarize some basic ideas which may be found in \cite{Barnes:2011}, \cite{Demir:2013}, and \cite{Gorbat:2013} along with many other facts. An algebra $\A$ is called Leibniz if it satisfies the identity $x(yz)=(xy)z+y(xz)$. Thus left multiplication is a derivation. If the algebra also satisfies $xy=-yx$, then it is a Lie algebra. Results on low-dimensional classification, nilpotency, solvability, derivations, and semisimplicity of these algebras are found throughout the literature.\\
\indent Cyclic Leibniz algebras, those generated by a single element, are especially basic in structure. In this case $\A$ has a basis $\{a,a^2,\ldots, a^n\}$ and $aa^n=\alpha_1 a+\alpha_2 a^2+\ldots +\alpha_n a^n$. It is easily shown that $\alpha_1=0$. A particularly important ideal of any Leibniz algebra $\A$ is the span of the squares of the elements of $\A$, called $Leib(\A)$. Since $a^j \A=0$ for all $j>1$, $Leib(\A)$ is abelian and it is the smallest ideal of $\A$ for which the quotient is Lie. Thus for cyclic Leibniz algebras the left multiplication by $a$, characterized by the above polynomial, determines the algebra. The matrix for this multiplication is the companion matrix for the polynomial, hence the polynomial is both the characteristic and minimum polynomial for the matrix. Consequences of this are summarized in \cite{Ray:2012}. In particular, the number of maximal subalgebras, Engel subalgebras, and Fitting null components of left multiplications are greatly restricted and easily computed. Thus Frattini subalgebras, Cartan subalgebras, and minimal Engel subalgebras \cite{Barnes:2011} are easily found. Throughout this paper all algebras are taken over the field of complex numbers.

\section{Classification}

Let $\A = \Span{a,a^2,a^3}$ be a Leibniz algebra.  Then $aa^3 = \alpha a^2 + \beta a^3$ for some $\alpha,
\beta \in \mathbb{C}$.  In fact, choosing any $\alpha$
and any $\beta$ in $\mathbb{C}$ yields a Leibniz algebra \cite{Ray:2012}.  However, differing
choices for $\alpha$ and $\beta$ are not guaranteed to yield
non-isomorphic Leibniz algebras.  In this section, we classify the
3-dimensional cyclic Leibniz algebras up to isomorphism.

First, the case when $\alpha=\beta=0$ is the nilpotent cyclic Leibniz
algebra of which there is only one up to isomorphism.  In the case where $\alpha = 0$ and $\beta \neq 0$, let $x =
\frac{1}{\beta}a$.  Then $xx^3 = x^3$.  For the case when $\alpha \neq 0$, let $x =
\frac{1}{\sqrt{\alpha}}a$.  It follows that $xx^3 = x^2 + \frac{\beta}{\sqrt{\alpha}} x^3$. Thus any 3-dimensional non-nilpotent cyclic Leibniz algebra with
$\alpha \neq 0$ has
a generator $x$ which either has multiplication $xx^3 = x^2+\gamma
x^3$ for some $\gamma \in \mathbb{C}$, or multiplication $xx^3 = x^3$.  The following
theorem completes the classification of 3-dimensional cyclic Leibniz
algebras up to isomorphism.

\begin{thm}
\label{classification}
Let $\A$ be a 3-dimensional cyclic Leibniz algebra.  Then $\A$ has
a generator $a$ with one and only one of the following multiplications:
\begin{enumerate}
\item[(i)]{$aa^3 = 0$ ($\A$ is nilpotent),}
\item[(ii)]{$aa^3 = a^3$,}
\item[(iii)]{$aa^3 = a^2 + \gamma a^3$ for precisely one $\gamma \in \mathbb{C}$ such that $0 \leq \arg(\gamma) < \pi.$}
\end{enumerate}
\end{thm}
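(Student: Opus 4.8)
The plan is to reduce the whole statement to a fact about the single linear operator $L$ of left multiplication by a generator. First I would record the structural observation already implicit in Section~\ref{basics}: since $a^j\A=0$ for $j>1$, left multiplication by an arbitrary $x=\lambda a+\mu a^2+\nu a^3$ is exactly $\lambda L$, where $\lambda$ is the coefficient of $a$ in $x$; hence $x$ generates $\A$ precisely when $\lambda\neq 0$ and $\{x,x^2,x^3\}$ is a basis. I would then note that two $3$-dimensional cyclic Leibniz algebras $\A$ and $\A'$ are isomorphic if and only if some generator of $\A$ and some generator of $\A'$ realize the \emph{same} structure constants $aa^3=\al a^2+\be a^3$: the forward direction holds because an algebra isomorphism carries the subalgebra generated by an element to the one generated by its image and commutes with forming left-normed powers, so it sends a generator $a$ and its powers $a,a^2,a^3$ to a generator $b=\phi(a)$ and its powers with unchanged structure constants; the backward direction follows by declaring $a^i\mapsto b^i$, which is then forced to be an algebra isomorphism because all other products $a^ia^j$ (with $i\geq 2$) vanish on both sides.

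Next I would work out how the structure constants change when the generator is replaced. If $x$ is a generator of $\A$ with coefficient $\lambda\neq 0$ on $a$, then $L_x=\lambda L$; writing $xx^3=\al'x^2+\be'x^3$ for the structure constants relative to $x$, the companion-matrix description recalled in Section~\ref{basics} shows that the characteristic polynomial of $L_x$ in the basis $\{x,x^2,x^3\}$ is $\mu^3-\be'\mu^2-\al'\mu$, while the characteristic polynomial of $\lambda L$ is $\mu^3-\lambda\be\,\mu^2-\lambda^2\al\,\mu$. Equating the two (they describe the same operator) yields $(\al',\be')=(\lambda^2\al,\lambda\be)$; in particular the $a^2$- and $a^3$-components of $x$ are irrelevant. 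Since $x=\lambda a$ is a generator for every $\lambda\in\C^\times$, the set of structure-constant pairs attainable from $\A$ is exactly the orbit
\[
O(\al,\be)=\{(\lambda^2\al,\lambda\be):\lambda\in\C^\times\}.
\]
(Alternatively one may verify $(\al',\be')=(\lambda^2\al,\lambda\be)$ by expanding $x\cdot x^3$ directly in the basis $\{x,x^2,x^3\}$, but the operator computation is cleaner.) This transformation law is the technical core of the proof.

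Finally I would read off the theorem from these orbits. One has $O(0,0)=\{(0,0)\}$; for $\be\neq 0$, $O(0,\be)=\{0\}\times\C^\times$; and for $\al\neq 0$, $O(\al,\be)\subseteq\C^\times\times\C$. These three families are pairwise disjoint, so the algebras in cases (i), (ii), (iii) are pairwise non-isomorphic and case (ii) is a single isomorphism class. In case (iii), using the normalization $\al=1$ already performed before the theorem (so $aa^3=a^2+\gamma a^3$), we get $O(1,\gamma)=O(1,\gamma')$ if and only if $(1,\gamma')=(\lambda^2,\lambda\gamma)$ for some $\lambda$, i.e.\ $\lambda^2=1$, i.e.\ $\gamma'=\pm\gamma$. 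Thus the isomorphism class of a case-(iii) algebra is determined by the unordered pair $\{\gamma,-\gamma\}$, and since for $\gamma\neq 0$ exactly one of $\gamma,-\gamma$ has argument in $[0,\pi)$ (with $\gamma=0$ being its own representative), precisely one value of $\gamma$ with $0\le\arg(\gamma)<\pi$ occurs. Combined with the existence of a generator in each normal form established in the paragraphs preceding the theorem, this shows every $3$-dimensional cyclic Leibniz algebra has a generator realizing one and only one of (i), (ii), (iii). The main obstacle is the second paragraph --- establishing the transformation law and, in particular, confirming that only the scalar $\lambda$ affects the structure constants; once that is in hand, the disjointness of the three cases and the $\arg$-normalization are routine.
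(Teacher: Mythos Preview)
Your argument is correct and takes a genuinely different route from the paper's. The paper proceeds by direct computation: for type \textit{(ii)} it expands $xx^3$ for an arbitrary generator $x=c_1a+c_2a^2+c_3a^3$ and finds $xx^3=c_1x^3$, ruling out type \textit{(iii)}; for type \textit{(iii)} it solves the system $bb^3=b^2+\gamma_2 b^3$ in the unknowns $c_1,c_2,c_3$, observes that the only candidates outside $\A^2$ are multiples of $t=a+\gamma_1 a^2-a^3$, and checks that $t^2=0$ so $t$ is not a generator. Your approach instead extracts the transformation law $(\al',\be')=(\lambda^2\al,\lambda\be)$ in one stroke by comparing characteristic polynomials of $L_x=\lambda L_a$ in two bases, and then reads off the classification as the orbit decomposition of $\C^2$ under $\lambda\cdot(\al,\be)=(\lambda^2\al,\lambda\be)$.

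What each approach buys: yours is cleaner, basis-free in spirit, and visibly generalizes to dimension $n$ (the same argument gives $\al_k'=\lambda^{\,n-k+1}\al_k$ for the companion coefficients, so $n$-dimensional cyclic Leibniz algebras are classified by the orbits of this weighted $\C^\times$-action). It also makes transparent \emph{why} only the $a$-coefficient of a new generator matters, something the paper discovers only implicitly through its system of equations. On the other hand, the paper's computation produces as a by-product the explicit non-generator $t=a+\gamma a^2-a^3$ with $t^2=0$, which is exactly the spanning vector of the Cartan subalgebra $\A_0$ found later in Section~4; your operator argument skips past this and would have to recompute it. One small point worth tightening: your phrase ``$x$ generates $\A$ precisely when $\lambda\neq 0$ and $\{x,x^2,x^3\}$ is a basis'' is slightly redundant (the second condition already forces the first), and you should note explicitly, as you do, that $x=\lambda a$ is always a generator so that the full orbit is realized---this is what replaces the paper's verification that the non-$\A^2$ solutions fail to generate.
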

\begin{proof}
The fact that any 3-dimensional cyclic Leibniz algebra belongs to this
list follows from the arguments given above.  We complete the
proof by showing that each algebra can have no more than one of the
multiplications in this list.

Assume that $\A$ has multiplication $aa^3 = a^3$.  Let
$x = c_1a + c_2 a^2 + c_3 a^3 \in \A$ be a generator.  Then
\begin{align*}
x^3 &= c_1^2(c_1 + c_2 + c_3) a^3 \\
xx^3 &= c_1^3(c_1+ c_2 + c_3) a^3 \\
 &=c_1x^3. 
\end{align*}
Thus $\A$ has no generator with multiplication of type \textit{(iii)}.

Finally, let $\gamma_1 \neq \gamma_2$ be elements of $\mathbb{C}$.   Let $\A$
have multiplication $aa^3 = a^2+\gamma_1 a^3$.  We wish to see if there exists a generator $b = c_1a+c_2a^2+c_3a^3$ such that
\begin{equation}
\label{conditions}
bb^3 = b^2+ \gamma_2 b^3.
\end{equation}
If $\gamma_2 = -\gamma_1$, then $b=-a$ is also a generator for $\A$ satisfying (\ref{conditions}).
In other words generators having multiplication types $aa^3= a^2 + \gamma a^3$ and $bb^3 = b^2 -\gamma b^3$ are contained in the same algebra. Thus the restriction on the argument of $\gamma$ in \textit{(iii)}.

Now assume $\gamma_2 \neq \pm \gamma_1$.  Then (\ref{conditions}) gives a system of equations in $c_1,c_2,c_3$. Every element of $\A^2=\Span{a^2,a^3}$ solves this system trivially.  However, no element of $\A^2$ is a generator. The only solutions not in $\A^2$ are the nonzero multiples of $t=a+\gamma_1 a^2-a^3$. But then $t^2 = 0.$
Thus $t$ is not a generator, and therefore no multiple of $t$ is a generator.  So $\A$ contains no generators which satisfy (\ref{conditions}).  It follows that there is an isomorphism between the
algebra with multiplication $aa^3 = a^2+\gamma_1 a^3$ and the algebra with multiplication $aa^3
= a^2+\gamma_2 a^3$ if and only if $\gamma_2 = \pm \gamma_1$.
\end{proof}

We wish to see how this classification of 3-dimensional cyclic
Leibniz algebras fits into the overall classification of 3-dimensional
Leibniz algebras.  Theorems 6.4 and 6.5 of \cite{Demir:2013} classify the non-lie nilpotent and non-nilpotent Leibniz algebras of dimension 3 respectively.  We investigate which of these
are cyclic.

\begin{thm} Of the five classes listed in \cite[Thm. 6.4]{Demir:2013}, only (1) is a cyclic Leibniz algebra (the nilpotent case).
\end{thm}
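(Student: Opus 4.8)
The plan is to verify directly that class (1) is cyclic and then to rule out the other four by a single numerical invariant, the dimension of the derived algebra. For class (1), with bracket $[e_1,e_1]=e_2$ and $[e_1,e_2]=e_3$ (all remaining brackets zero), I would take $a=e_1$, so that $a^2=e_2$, $a^3=[e_1,e_2]=e_3$, and $\{a,a^2,a^3\}$ is a basis with $aa^3=[e_1,e_3]=0$. Thus class (1) is cyclic, and in the language of Theorem \ref{classification} it is the algebra of type \textit{(i)}.

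For the obstruction, recall that any $3$-dimensional cyclic Leibniz algebra has a basis of the form $\{a,a^2,a^3\}$, so that $\A^2=\Span{a^2,a^3}$ has dimension exactly $2$. Hence a necessary condition for one of the five classes of \cite[Thm. 6.4]{Demir:2013} to be cyclic is $\dim \A^2 = 2$. I would therefore read off the structure constants of classes (2)--(5) and compute $\A^2$, the span of all products of pairs of basis vectors, in each case; the computation shows $\dim \A^2 \le 1$ for each of these four, so none of them is cyclic.

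Should the explicit list happen to contain a class other than (1) with a $2$-dimensional derived algebra, a soft argument still closes the gap: by Theorem \ref{classification} there is only one $3$-dimensional cyclic nilpotent Leibniz algebra up to isomorphism, class (1) realizes it, and the five classes of \cite[Thm. 6.4]{Demir:2013} are pairwise non-isomorphic, so at most one of them can be cyclic. The only real care needed is in transcribing each listed multiplication correctly and, in any borderline case, in checking that an element outside $\A^2$ fails to generate --- which happens precisely because $a^2=0$ or $aa^2\in\Span{a^2}$ forces $\langle a\rangle$ to be at most $2$-dimensional. I expect this bookkeeping, rather than any conceptual difficulty, to be the main obstacle.
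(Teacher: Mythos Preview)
Your proposal is correct and matches the paper's own proof almost exactly: the paper also verifies class (1) is cyclic by taking $a=x$ and then rules out classes (2)--(5) by observing that each has $\dim \A^2 = 1$, contradicting the requirement $\dim \A^2 = 2$ for a $3$-dimensional cyclic algebra. Your fallback uniqueness argument is a nice safety net but is not needed, since in fact all four remaining classes do have one-dimensional derived algebra.
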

\begin{proof} Let $\A = \Span{x,y,z}$ and consider each class, described by its non-zero products.
\begin{enumerate}
  \item $x^2 = y, xy = z$.\\
    \indent Clearly $x, x^2 = y,$ and $x^3 = xx^2 = z$ form a linearly independent basis for $\A$, so $\A$ is cyclic and isomorphic to the nilpotent algebra with $aa^3 = 0$.\\
  \item $x^2 = z$.\\
	\indent Here $\A^2 = \Span{z}$, so $dim(\A^2) = 1$. If $\A$ is cyclic, $dim(\A^2) = 2$. Hence we have a contradiction, and $\A$ is not cyclic.\\
  \item $xy = z, yz = z$.\\
	\indent Similar to (2)\\
  \item $xy = z, yx = -z, y^2 = z$.\\
	\indent Similar to (2).\\
  \item $xy = z, yx = \alpha z, \alpha \in \C \setminus \{1,-1\}$.\\
	\indent Similar to (2).
\end{enumerate}
\end{proof}

\begin{thm} Of the seven classes listed in \cite[Thm 6.5]{Demir:2013}, only (5) (with $\alpha \neq 1$), (6), and (7) are cyclic Leibniz algebras.
\end{thm}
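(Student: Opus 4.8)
The plan is to argue exactly as in the preceding theorem. A $3$-dimensional Leibniz algebra $\A$ is cyclic if and only if it contains an element $a$ with $\{a,a^2,a^3\}$ linearly independent, in which case $\A^2=\Span{a^2,a^3}$; hence $\dim(\A^2)=2$ is a necessary condition. So first I would list the seven classes of \cite[Thm.~6.5]{Demir:2013} by their nonzero products and compute $\dim(\A^2)$ for each. Every class with $\dim(\A^2)\le 1$ is disposed of immediately, just as classes (2)--(5) were handled in the nilpotent theorem by the observation ``if $\A$ is cyclic then $\dim(\A^2)=2$.'' I expect this eliminates all but (5), (6), (7), together with the exceptional parameter value $\alpha=1$ in (5), which must be treated separately.

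For the surviving classes I would take a general element $w=c_1x+c_2y+c_3z$, compute $w^2$ and $w^3$ from the structure constants, and examine the $3\times 3$ matrix whose rows are the coordinate vectors of $w,w^2,w^3$: cyclicity is equivalent to this determinant not vanishing identically as a polynomial in $c_1,c_2,c_3$. For (6), for (7), and for (5) with $\alpha\ne 1$, I expect to exhibit an explicit generator $a$ (something like $a=x$ or a small linear combination of basis vectors), verify that $\{a,a^2,a^3\}$ is linearly independent, and — as an extra confirmation — read off the resulting multiplication $aa^3$ and match it against Theorem~\ref{classification}, thereby identifying each with type (ii) or with a member of the type-(iii) family.

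The main obstacle is the borderline case (5) with $\alpha=1$, where $\dim(\A^2)=2$, so the cheap dimension argument fails and I must show directly that no generator exists. The approach is to prove that for every $w\in\A$ the element $w^3$ is a scalar multiple of $w^2$ — equivalently, that left multiplication by any $w$ has rank at most $1$ on $\A$, so its minimal polynomial has degree less than $3$ — which forces $\{w,w^2,w^3\}$ to be linearly dependent; this is precisely the analogue of the computation $xx^3=c_1x^3$ carried out in the proof of Theorem~\ref{classification}. Once this case is closed, collecting the computations yields exactly the asserted list.
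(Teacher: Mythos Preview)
Your plan for classes (5), (6), (7), including the borderline case (5) with $\alpha=1$, matches the paper's proof essentially verbatim: take a generic $t=c_1x+c_2y+c_3z$, compute $t^2,t^3$, and check linear independence; for $\alpha=1$ the computation $t^2=c_3(c_1x+c_2y)$, $t^3=c_3^2(c_1x+c_2y)=c_3t^2$ gives exactly the dependence you predict.

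The gap is in your treatment of classes (2)--(4). You expect the cheap criterion $\dim(\A^2)\le 1$ to dispose of everything except (5), (6), (7), but in fact classes (2), (3), (4) all have $\dim(\A^2)=2$: for instance in (2) the relations $zx=\alpha x$ and $zy=y$ already force $x,y\in\A^2$. So the dimension-of-$\A^2$ argument eliminates only class (1). The paper handles (2)--(4) by a genuinely different (and sharper) invariant: it exhibits a $1$-dimensional ideal $X$ with $\A/X$ Lie, whence $Leib(\A)\subseteq X$ has dimension at most $1$; since a cyclic algebra has $Leib(\A)=\A^2$ of dimension $2$, this is a contradiction. The point is that $Leib(\A)$ (the span of squares of \emph{elements}) can be strictly smaller than $\A^2$; in class (2), every $w^2$ lands in $\Span{x}$ even though $\A^2=\Span{x,y}$. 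You could instead salvage your approach by running the full generator computation for (2)--(4) as well, but you should be aware that the quick necessary condition you rely on does not do the work you expect.
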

\begin{proof} Let $\A = \Span{x,y,z}$ and consider each class, described by its non-zero products.
\begin{enumerate}
  \item $zx = x$.\\
	\indent Here $\A^2 = \Span{x}$, so $dim(\A^2) = 1$. Thus $\A$ is not cyclic as shown in the previous proof.\\
  \item $zx = \alpha x, \alpha \in \C \setminus \{0\}; zy = y, yz = -y$.\\
	\indent Let $X = \Span{x}$. Then $\A/X$ is a two dimensional ideal with basis $\{y+X,z+X\}$. Also, $(y+X)(y+X) = (z+X)(z+X) = 0+X$ and $(y+X)(z+X) = -(z+X)(y+X)$. Thus $\A/X$ is a Lie algebra. As mentioned above, $Leib(\A)$ is the smallest ideal of $\A$ whose quotient is Lie. So $Leib(\A)$ is no larger than $X$, an ideal of dimension one. But if $\A$ is cyclic, then $Leib(\A) = \A^2$ which has dimension two. Thus we have a contradiction, and $\A$ is not cyclic.\\
  \item $zy = y, yz = -y, z^2 = x$.\\
	\indent Similar to (2).\\
  \item $zx = 2x, y^2 = x, zy = y, yz = -y, z^2 = x$.\\
	\indent Similar to (2).\\
  \item $zy = y, zx = \alpha x, \alpha \in \C \setminus \{0\}$.\\
	\indent Let $t = c_1x+c_2y+c_3z$ be an arbitrary element of $\A$. Then $t^2 = c_1c_3\alpha x+c_2c_3y$ and $t^3 = c_1c_3^2 \alpha^2 x+c_2c_3^2y$. By algebraic manipulation, we can show that $\{t, t^2, t^3\}$ is a linearly independent set when $c_1,c_2,c_3 \neq 0$ and $\alpha \neq 1$. Thus $\A$ is cyclic if and only if $\alpha \neq 1$, in which case it is generated by $t$ for any $t = c_1x+c_2y+c_3z$ with $c_1, c_2, c_3 \neq 0$.\\
  \item $zx = x+y, zy = y$.\\
	\indent Let $t = c_1x+c_2y+c_3z$ be an arbitrary element of $\A$. Then $t^2 = c_1c_3x+c_3(c_1+c_2)y$ and $t^3 = c_1c_3^2x+c_3^2(2c_1+c_2)y$. We find that $\{t, t^2, t^3\}$ is a linearly independent set when $c_1,c_3 \neq 0$. Thus $\A$ is cyclic and generated by $t$ for any $t = c_1x+c_2y+c_3z$ with $c_1, c_3 \neq 0$.\\
  \item $zx = y, zy = y, z^2 = x$.\\
\indent Let $t = c_1x+c_2y+c_3z$ be an arbitrary element of $\A$. Then $t^2 = c_3^2x+c_3(c_1+c_2)y$ and $t^3 = c_3^2(c_1+c_2+c_3)y$. We find that $\{t,t^2,t^3\}$ is a linearly independent set when $c_3 \neq 0$. Thus $\A$ is cyclic and generated by $t$ for any $t = c_1x+c_2y+c_3z$ with $c_3 \neq 0$.
\end{enumerate}
\end{proof}

We have shown that class (5) (with $\alpha \neq 1$), class (6), and
class (7) are cyclic.  For any algebra in class (5), let $a = c_1x+c_2y+c_3z$ be a generator. We compute the multiplication to be $aa^3 = -\alpha c_3^2 a^2 + (\alpha + 1) c_3 a^3$. Then $x = \frac{1}{\sqrt{-\alpha c_3^2}} a = -\frac{1}{c_3 \sqrt{\alpha}} i a$ is also a generator, with multiplication 
$xx^3=x^2-\frac{\alpha+1}{\sqrt{\alpha}}ix^3$.  So the algebras of class (5) are have type \textit{(iii)} multiplication where $\gamma = \pm\frac{\alpha+1}{\sqrt{\alpha}}i$.  Note that $\alpha$
may be chosen to obtain any $\gamma$ except $\gamma = \pm 2i$, since this
corresponds to $\alpha = 1$.\\

For any algebra in class (6) with generator $a = c_1x+c_2y+c_3z$, we calculate the multiplication to be $aa^3 = -c_3^2 a^2 + 2c_3 a^3$.  Taking $x = -\frac{1}{\sqrt{-c_3^2}} a = -\frac{1}{c_3} i a$, we obtain the multiplication $xx^3 = x^2+2ix^3$, which corresponds precisely with the algebra with type \textit{(iii)} multiplication that was missing in class (5).

Finally, for any algebra in class (7) with generator $a$ as above, we calculate the multiplication to be $aa^3 = c_3 a^3$. This algebra also has the generator $x = \frac{1}{c_3} a$ with multiplication
$xx^3=x^3$. So this is the algebra with type $\textit{(ii)}$ multiplication.  Thus all the isomorphism classes from Theorem \ref{classification} have been accounted for in the general classification theorems for 3-dimensional Leibniz algebras found in \cite{Demir:2013}.

\section{Engel and Cartan subalgebras}

Cartan subalgebras for Leibniz algebras were originally defined as nilpotent subalgebras which are their own right normalizer \cite{Omirov:2006}.  However, as was shown in \cite{Barnes:2011}, it is enough that the nilpotent subalgebra be its own normalizer.  A natural
question to ask is whether Cartan subalgebras are always their own left normalizer.  The answer is negative, and
in this section we show that
each 3-dimensional non-nilpotent cyclic Leibniz algebra gives an
example of a Cartan
subalgebra whose left normalizer is not itself.

We define Engel subalgebras as in Lie algebras: for each $x \in \A$ we define
\[
E_\A(x) = \{t \in \A | L_x^k(t) = 0 \text{ for some } k \}.
\]
Then $E_\A(x)$ is a subalgebra \cite{Barnes:2011} called an Engel
subalgebra.  We call an Engel subalgebra a minimal Engel subalgebra if it does not properly contain any other Engel subalgebra.

In \cite{Barnes:2011}, Barnes shows that a subalgebra of a Leibniz algebra is a Cartan subalgebra if and
only if it is minimal Engel.  For cyclic Leibniz algebras this
translates into an unbelievably simple
characterization of Cartan subalgebras, which is given in the following
theorem.  Note that $E_\A(a)$ is also the Fitting null component of $L_a$.  So we will use the notation $A_0 = E_\A(a)$.

\begin{thm}
Let $\A = \Span{a,a^2,\ldots,a^n}$ be a cyclic Leibniz algebra.  Then
$\A_0 = E_\A(a)$ is the unique Cartan subalgebra of $\A$.
\end{thm}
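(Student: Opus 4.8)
The plan is to exploit the extreme rigidity of left multiplication in a cyclic Leibniz algebra. Write $\A = \Span{a} \oplus \A^2$ as vector spaces, where $\A^2 = \Span{a^2, a^3, \ldots, a^n}$. The crux of the whole argument is the observation that \emph{every} left multiplication operator on $\A$ is a scalar multiple of $L_a$. Indeed, since $a^j\A = 0$ for all $j > 1$, we have $x'\A = 0$ for every $x' \in \A^2$, so $L_{x'} = 0$; hence if $x = c_1 a + x'$ with $x' \in \A^2$, then $L_x = c_1 L_a$.

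Given this reduction, the Engel subalgebras of $\A$ are determined immediately. If $c_1 \neq 0$, then $L_x^k = c_1^k L_a^k$, so $L_x^k(t) = 0$ exactly when $L_a^k(t) = 0$, whence $E_\A(x) = E_\A(a) = \A_0$. If $c_1 = 0$, then $L_x = 0$ and so $E_\A(x) = \A$. Therefore the only subalgebras of $\A$ of the form $E_\A(x)$ — that is, the only Engel subalgebras — are $\A_0$ and $\A$ itself.

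To conclude I would split into two cases. If $\A$ is nilpotent, then $L_a$ is a nilpotent operator, so $\A_0 = \A$, and $\A$ is the unique Engel subalgebra, hence trivially the unique minimal Engel subalgebra. If $\A$ is not nilpotent, then not every $L_x$ is nilpotent by Engel's theorem for Leibniz algebras; since $L_x = c_1 L_a$, this forces $L_a$ to be non-nilpotent, so $\A_0 = E_\A(a) \subsetneq \A$. As the only Engel subalgebras are $\A_0$ and $\A \supsetneq \A_0$, it follows that $\A_0$ is the unique minimal Engel subalgebra. In either case, Barnes's characterization of Cartan subalgebras as precisely the minimal Engel subalgebras \cite{Barnes:2011} gives that $\A_0$ is the unique Cartan subalgebra of $\A$.

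There is essentially no hard step once the identity $L_x = c_1 L_a$ is in hand; the only points that require a little care are verifying that $\A_0$ coincides with $\A$ exactly in the nilpotent case — so that minimality is not vacuously failing elsewhere — and recalling that by definition every Engel subalgebra is $E_\A(x)$ for some $x \in \A$, so that the enumeration of Engel subalgebras above is genuinely exhaustive.
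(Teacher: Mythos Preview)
Your proof is correct and follows essentially the same route as the paper: reduce every $L_x$ to $c_1 L_a$, conclude that the only Engel subalgebras are $\A_0$ and $\A$, and invoke Barnes's minimal-Engel characterization. If anything you are slightly more careful than the paper in explicitly separating the nilpotent case ($\A_0=\A$) from the non-nilpotent case ($\A_0\subsetneq\A$), but the argument is the same.
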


\begin{proof}
Let $x \in A$.  then $x = c_1
a+c_2a^2+\cdots+c_na^n$.  Recall that $L_{a^i}=0$ for all
$i>1$.  Thus $L_x = c_1L_a$.  It follows that 
\[
E_A(x) =\{t \in \A|c_1^kL_{a}^k(t) = 0
\text{ for some } k\}.
\]
Either $c_1 \neq 0$ in which case $E_\A(x) = \A_0$, or
$c_1 = 0$ in which case $L_x=0$ and $E_\A(x) = \A$.  Thus $\A_0$
is the only nontrivial Engel subalgebra of $\A$.  Therefore $\A_0$ is
the only minimal Engel subalgebra.  It follows that $\A_0$
is the unique Cartan subalgebra of $\A$.
\end{proof}

Now we compute $\A_0$
for each of the isomorphism classes given in Theorem
\ref{classification}.  For the nilpotent case, we clearly have $\A_0 = \A$.  
\subsection{Multiplication of Type $aa^3 = a^3$}
Let $\A$ be the 3-dimensional cyclic Leibniz algebra with multiplication $aa^3 = a^3$.  Let $x = c_1 a + c_2
a^2 + c_3 a^3$.  Then 
\begin{align*}
L_a(x) &= c_1 a^2 + (c_2 + c_3) a^3 \\
L_a^2(x) &= (c_1 + c_2 + c_3)a^3.
\end{align*}
Therefore $x_1 = a-a^3$ satisfies $x_1^2 = a^2-a^3 \neq 0$ and $x_1^3= 0$.  Since $x_1$ and $x_1^2$ are linearly independent,  we have
$\A_0 = \Span{x_1,x_1^2}$, the nilpotent cyclic 2-dimensional algebra
generated by $x_1$.  It is interesting to note that $\A_0$ is its own right normalizer by
definition, but given $x = \alpha x_1 + \beta x_1^2 \in \A_0$ we have
\begin{align*}
ax &= a(\alpha x_1 + \beta x_1^2) \\
& = \alpha a(a-a^3)+ \beta a(a^2-a^3)\\
& = \alpha (a^2-a^3) + \beta (a^3 - a^2) \\
& = (\alpha - \beta)(a^2 - a^3) \in \A_0.
\end{align*}
Thus $[\A,\A_0] \in \A_0$.  So $\A$ is the left normalizer of $\A_0$.  Therefore
$\A_0$ is an example of a Cartan subalgebra whose left normalizer is
not itself.  Interestingly, $\A_0$ is also an example of a left ideal
that is not a right ideal.

\subsection{Multiplication of Type $aa^3 = a^2+\gamma a^3$}
Now we consider the algebras with multiplication $aa^3 = a^2 + \gamma
a^3$.  Let $x = c_1 a + c_2 a^2 + c_3 a^3 \in \A_0$.  We have
\begin{align*}
L_a(x) &= (c_1+c_3)a^2 + (c_2 + \gamma c_3) a^3\\
L_a^2(x)&= (c_2 + c_3 \gamma) a^2 + (c_1 + \gamma c_2 + (\gamma^2+1)c_3)a^3.
\end{align*}
Thus $x^3 = 0$ gives the following system of equations
\begin{align*}
c_2 + c_3 \gamma &= 0 \\
c_1 + \gamma c_2 + (\gamma^2+1)c_3 &= 0,
\end{align*}
which simplifies to $c_2 = -\gamma c_3$ and $c_1 = - c_3$.
But with these two conditions, $x$ also satisfies $x^2 = 0$.  We have
\begin{align*}
x &= - c_3 a - c_3 \gamma a^2 + c_3 a^3\\
&= -c_3(a + \gamma a^2 - a^3).
\end{align*}
It follows that $\A_0 = \Span{a+ \gamma a^2 - a^3}$. 

We again have $N^L_A(\A_0) = \A$ since
\[
a(a+\gamma a^2-a^3) = a^2+\gamma a^3 - (a^2 + \gamma a^3) = 0.
\]
We conclude that the left-normalizer of $\A_0$ in any non-nilpotent
cyclic Leibniz algebra of dimension 3 is always $\A$.

\section{Maximal and Frattini Subalgebras} \label{maximal}
Let $\A$ be a cyclic Leibniz algebra generated by $a$, and let $T$ be the matrix for $L_a$ with respect to the basis $\{a, a^2, \ldots, a^n\}$. Then $T$ is the companion matrix for the polynomial $p(x) = x^n - \al_n x^{n-1} - \ldots - \al_2 x = p_1(x)^{n_1} \cdots p_s(x)^{n_s}$, where the $p_j$ are distinct irreducible factors of $p(x)$. Let $r_j(x) = p(x)/p_j(x)$ for $1 \le j \le s$. The maximal subalgebras of $\A$ are the null spaces of each $r_j(T)$, and the Frattini subalgebra $\Phi(\A)$ is the null space of $q(T)$ where $q(x) = p_1(x)^{n_1-1} \cdots p_s(x)^{n_s-1}$ \cite{Ray:2012}.\\
\indent As an example of these results, we directly compute the maximal subalgebras and the Frattini subalgebra for each 3-dimensional algebra from Theorem \ref{classification}. Note that if $\A$ is an $n$-dimensional algebra, it has no more than $n$ maximal subalgebras (since $p(x)$ has no more than $n$ distinct irreducible factors). In this example, the greatest number of maximal subalgebras is three.

\renewcommand{\arraystretch}{1.5}
\begin{table}[htbp]
\caption{Maximal Subalgebras for 3-Dimensional Cyclic Leibniz Algebras}
\centering
\begin{tabular}{c|c|c|c|l}
\cline{2-4}
& {\bf Type} (Thm. \ref{classification}) & {\bf Maximal subalgebras} & ${\bf \Phi(\A)}$ & \\ \cline{2-4}
& \textit{(i)} & $\A^2$ & $\A^2$ &\\ \cline{2-4}
& \textit{(ii)} & $\A^2$, span$\{a-a^2,a-a^3\}$ & span$\{a^2 - a^3\}$ & \\ \cline{1-4}
\multicolumn{1}{|c|}{\multirow{2}{*}{\textit{(iii)}}} & $\gamma = 2i$ & $\A^2$, span$\{ia - a^2, a + a^3\}$ & span$\{ia^2 - a^3\}$ & \\ \cline{2-4}
\multicolumn{1}{|c|}{}  & All other $\gamma$ & $\A^2$, span$\{r_1a^2-a^3,a+r_2a^2\}$, span$\{r_2a^2-a^3,a+r_1a^2\}$ & 0 & \\ \cline{1-4}
\end{tabular}
\end{table}

Recall that if $\A$ is a cyclic Leibniz algebra, then $\A^2 = Leib(\A)$ is abelian. A Leibniz algebra with $\A^2$ nilpotent is elementary if and only if its Frattini subalgebra is trivial \cite{Batten:2013}. Thus the only algebras in the above table which are elementary are those of type \textit{(iii)} with $\gamma \neq 2i$.

\section{Nilpotency}

The lower central series appears immediately in discussions of nilpotent Lie and Leibniz algebras. Implied is the fact that any product of $n$ elements can be written as a linear combination of left normed products which involve the $n$ elements, where a product is left normed if it is of the form  $x_n(x_{n-1}(\ldots (x_3(x_2 x_1))\ldots ))$. Hence if every product of $n$ elements which are left normed is zero, then the algebra is nilpotent. In Leibniz algebras, it is natural to ask if we can replace ``left normed" by ``right normed." In fact, we cannot; this is shown by the non-nilpotent 3-dimensional cyclic Leibniz algebras where any right normed product of three elements is zero. The main theorem about nilpotency for Lie or Leibniz algebras is Engel's theorem, which states that if all left multiplications are nilpotent, then the algebra is nilpotent (\cite{Barnes:2012},\cite{Bosko:2011},\cite{Patsour:2007}). Again, we ask if ``left" can be replaced by ``right." The same non-nilpotent examples cited above show that this is not possible. In summary,

\begin{thm} In Leibniz algebras:
\begin{enumerate}
\item[a.] Not every product can be written as a linear combination of right normed products of the same elements.
\item[b.] If every right normed product of length $n$ is zero, this does not imply that the algebra is nilpotent.
\item[c.] If all right multiplications are nilpotent, this does not imply that the algebra is nilpotent.
\end{enumerate}
\end{thm}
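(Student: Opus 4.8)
The plan is to prove all three parts at once using the non-nilpotent $3$-dimensional cyclic Leibniz algebras of types \textit{(ii)} and \textit{(iii)} from Theorem \ref{classification}. The one structural fact doing all the work is that in any cyclic Leibniz algebra $\A = \Span{a, a^2, \ldots, a^n}$ we have $a^j\A = 0$ for every $j > 1$; equivalently $\A^2\A = 0$, and for any $t = c_1 a + c_2 a^2 + \cdots + c_n a^n$ the left multiplication satisfies $L_t = c_1 L_a$, where $c_1$ is the coefficient of $a$. The algebras of types \textit{(ii)} and \textit{(iii)} are not nilpotent (for them $L_a$ is not a nilpotent operator), so they are the right objects to feed into each statement.

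For part (b), take $\A$ to be a $3$-dimensional non-nilpotent cyclic Leibniz algebra. For any $x_1, x_2 \in \A$ the product $x_1 x_2 = c_1 L_a(x_2)$ lies in $\A^2$, so for any third element $x_3$ the right-normed product $(x_1 x_2)x_3$ lies in $\A^2\A = 0$; thus every length-$3$ right-normed product vanishes while $\A$ is not nilpotent, proving (b) with $n = 3$. For part (c), compute the right multiplication of the same $\A$: for fixed $x$ and arbitrary $t = c_1 a + c_2 a^2 + c_3 a^3$ we have $R_x(t) = tx = c_1(ax)$ with $ax \in \A^2$, and since elements of $\A^2$ have zero $a$-coefficient, applying $R_x$ once more gives $R_x^2(t) = 0$. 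Hence $R_x^2 = 0$ for every $x$, so all right multiplications are nilpotent although $\A$ is not, proving (c).

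Part (a) then follows by contraposition: if every product of $n$ elements could be written as a linear combination of right-normed products of those same elements, then in a $3$-dimensional non-nilpotent cyclic Leibniz algebra every product of three elements would be a combination of length-$3$ right-normed products, each of which is zero by the computation above, forcing nilpotency — a contradiction. Concretely, in the type \textit{(ii)} algebra $a^3 = a(aa)$ is a nonzero left-normed product of three copies of $a$, whereas the only right-normed such product is $(aa)a = a^2 a = 0$. I do not expect a genuine obstacle here; the only care needed is to fix the convention $a^3 = a\cdot a^2$ so that $a^3$ is literally a nonzero left-normed product and the asymmetry with right-normed products is manifest.
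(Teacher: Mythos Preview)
Your proof is correct and follows the same approach as the paper: both use the non-nilpotent $3$-dimensional cyclic Leibniz algebras (types \textit{(ii)} and \textit{(iii)}) as simultaneous counterexamples, exploiting the fact that $a^j\A=0$ for $j>1$ so that all right-normed products of length~$3$ vanish. Your write-up is in fact more detailed than the paper's, which only sketches the argument in the paragraph preceding the theorem; in particular, your explicit verification that $R_x^2=0$ and your concrete instance $a(aa)\neq 0$ versus $(aa)a=0$ for part~(a) make the reasoning fully explicit.
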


Following famous Lie algebra and group theory results, it has been shown that a Leibniz algebra is nilpotent if and only if all maximal subalgebras are ideals \cite{Demir:2013}. In fact, this holds if all maximal subalgebras are right ideals \cite{Barnes:2011}. A natural question is: can left replace right in this discussion? The answer is negative, as shown the example of the 3-dimensional cyclic algebra $\A$ determined by $aa^3 = a^3$. Recall from Section \ref{maximal} that the maximal subalgebras of $\A$ are $\A^2$ and $\Span{a-a^2, a-a^3}$. Since $L_a(\A^2) = \Span{a^3}$ and $L_a(\Span{a-a^2, a-a^3}) = \Span{a^2-a^3}$, we see that $\A^2$ and $\Span{a-a^2, a-a^3}$ are both left ideals. Thus every maximal subalgebra of $\A$ is a left ideal, but $\A$ is not nilpotent.

\section{Derivations}

Let $\A$ be a Leibniz algebra.  Every left multiplication in $\A$ is a
derivation by definition of the Leibniz product.  We call a derivation
$D$ of $\A$ an inner derivation if
there exists an element $x \in \A$ such that $D(y) = xy$ for all $y
\in \A$.  We denote the space of inner derivations of $\A$ as $\textbf{D}_I(\A)$.
If $\A = \Span{a,a^2,\ldots,a^n}$, then for
each element $x \in \A$ we have $x = c_1a+c_2a^2+\ldots +c_na^n$ for
some $c_1,\ldots,c_n \in \mathbb{C}$.  Thus $L_x = c_1L_a$ where $L_x$
and $L_a$ are the transformations of left multiplication by $x$ and $a$ respectively.  It
follows that the space of inner derivations of $\A$ is precisely
\begin{equation}
\label{derivation}
\textbf{D}_I(\A)=\{cL_a:c \in \mathbb{C} \}.
\end{equation}
We define the set of outer derivations $\textbf{D}_O(\A)$ to
be all derivations of $\A$ which are not inner.

In this section we compute the derivations, both inner and outer, for
all 3-dimensional cyclic Leibniz algebras.  In each of the following
cases, let $\A$ be the 3-dimensional cyclic Leibniz algebra with the
given multiplication, and let $D$ be a general derivation characterized by $D(a) =
\alpha_1 a+ \alpha_2 a^2 + \alpha_3 a^3$.

\subsection{Nilpotent Case: $aa^3 = 0$}
We calculate 
\begin{align*}
D(a^2)&=aD(a)+D(a)a=2 \alpha_{1}a^2 + \alpha_{2}a^3,\\
D(a^3)&=aD(a^2)+D(a)a^2 =3 \alpha_{1}a^3,\\
D(aa^3)&=aD(a^3)+D(a)a^3 = 0.
\end{align*}
Thus there are no constraints on $\alpha_1,$ $\alpha_2,$ and
$\alpha_3$.  The space of derivations of $\A$ is given by
\[
\textbf{D}(\A) = \left\{
\left(
\begin{array}{ccc}
\alpha_1 & 0 & 0 \\
\alpha_2 & 2\alpha_1 & 0 \\
\alpha_3 & \alpha_2 & 3 \alpha_1 \\
\end{array}
\right): \alpha_1,\alpha_2,\alpha_3 \in \mathbb{C}
 \right\}.
\]
So by (\ref{derivation}) we know that $\textbf{D}_I(\A)$ is the subset of $\textbf{D}$
with $\alpha_1 = \alpha_3 = 0$.  It follows that $\textbf{D}_O(\A)$ is the
subset of $\textbf{D}(\A)$ with $\alpha_1 \neq 0$ or $\alpha_3 \neq 0$.

\subsection{Multiplication type $aa^3 = a^3$}

We have 
\begin{align*}
D(a^2)&=2 \alpha_{1}a^2 +( \alpha_{2}+ \alpha_{3})a^3\\
D(a^3)&=( 3 \alpha_{1} + \alpha_{2}+ \alpha_{3})a^3\\
D(aa^3)&= (4 \alpha_{1} + \alpha_{2}+ \alpha_{3})a^3
\end{align*}
So from the relation $D(aa^3) = D(a^3)$ we obtain the restriction
$\alpha_{1}=0$.  Thus the space of derivations of $\A$ is given by
\[
\textbf{D}(\A) = \left\{
\left(
\begin{array}{ccc}
0 & 0 & 0 \\
\alpha_2 & 0 & 0 \\
\alpha_3 & \alpha_2+\alpha_3 & \alpha_2+\alpha_3 \\
\end{array}
\right): \alpha_2,\alpha_3 \in \mathbb{C}
 \right\}.
\]
So by (\ref{derivation}) we know that $\textbf{D}_I(\A)$ is the subset of $\textbf{D}(\A)$
with $\alpha_3 = 0$, and $\textbf{D}_O(\A)$ is the subset with $\alpha_3
\neq 0$.

\subsection{Multiplication type $aa^3 = a^2 + \gamma a^3$}
We compute
\begin{align*}
D(a^2)&=(2 \alpha_{1} + \alpha_{3})a^2 + (\alpha_{2}+\gamma
\alpha_{3})a^3\\
D(a^3)&=(\alpha_{2} + \gamma \alpha_{3})a^2 + [3 \alpha_{1} + \gamma
\alpha_{2} + (\gamma^{2} +1) \alpha_{3}]a^3\\
D(aa^3)&= [4 \alpha_1+\gamma \alpha_2+(\gamma^2+1)\alpha_3]a^2
+[4 \gamma \alpha_1+(\gamma^2+1)\alpha_2+(\gamma^3+2\gamma)\alpha_3]a^3
\end{align*}
The relation $D(aa^3) =D(a^2+\gamma a^3)$ yields the constraint
$\alpha_1 = 0$.  Therefore the space of derivations of $\A$ is
given by
\[
\textbf{D}(\A) = \left\{
\left(
\begin{array}{ccc}
0 & 0 & 0 \\
\alpha_2 & \alpha_3 & \alpha_2+\gamma\alpha_3 \\
\alpha_3 & \alpha_2+\gamma\alpha_3 & \gamma\alpha_2+(\gamma^2+1)\alpha_3 \\
\end{array}
\right): \alpha_2,\alpha_3 \in \mathbb{C}
 \right\}.
\]
It follows by (\ref{derivation}) that $\textbf{D}_I(\A)$ is the subset of
$\textbf{D}(\A)$ with $\alpha_3 = 0$, and $\textbf{D}_O(\A)$ is the subset with $\alpha_3 \neq 0$.

\section{Semisimplicity}

 In this section we recall the definition of semisimplicity for Leibniz algebras and also the Killing form \cite{Demir:2013}. We compute the Killing form in each of our examples and compute the radicals of each form. It is known that if $\A$ is semisimple then the radical of the Killing form, $\A^{\perp}$, is $Leib(\A)$. The converse result may not hold, as was shown by a cyclic example in \cite{Demir:2013}. All cyclic algebras have $Leib(\A)=\A^{2}$ and $\A^{2}$ abelian. Notice from this discussion that when $\A$ is semisimple, $\A^{\perp}=rad(\A)=Leib(\A)$. We will use our examples to show that neither $rad(\A)=\A^{\perp}$ nor $Leib(\A)=\A^{\perp}$ is sufficient to give that $\A$ is semisimple.

A Lie algebra $L$ is said to be semisimple if $rad(L)=\{0\}$. For a finite dimensional Lie algebra $L$ over a field $\F$, the Killing form $\kappa( , ) : L\times L \longrightarrow \F$ is defined by $\kappa(a, b)=tr(L_{a}L_{b})$ for all $a, b \in L$ and the radical of the Killing form $\kappa( , )$ on $L$ is defined by $L^{\perp}$=$\{b\in L |  \kappa(b,a)=0$ for all $a \in L\}$. The Killing form and the radical of the Killing form for a Leibniz algebra are defined as for Lie algebras. 

\subsection{Multiplication of Type $aa^3 = 0$}

It is clear that $\A=\Span{a, a^2, a^3}$ is a nilpotent Leibniz algebra. Then in some basis the left multiplication operators $L_{x}$ are upper-triangular nilpotent matrices by Engel's theorem \cite{Gorbat:2013}. So, the Killing form $\kappa( , )$  is trivial.

\subsection{Multiplication of Type $aa^3 = a^3$}

 Let $x=\alpha a +b$, $y=\beta a +c$, $x, y \in \A$ and $b, c \in \A^{2}$. Then
\[L_{x}=\alpha
\left(
\begin{array}{ccc}
0 & 0 & 0 \\
1 & 0 & 0 \\
0 & 1 & 1\\
\end{array}
\right), 
 L_{y}=\beta
\left(
\begin{array}{ccc}
0 & 0 & 0 \\
1 & 0 & 0 \\
0 & 1 & 1\\
\end{array}
\right),
 L_{x}L_{y}=\alpha \beta
\left(
\begin{array}{ccc}
0 & 0 & 0 \\
0 & 0 & 0 \\
1 & 1 & 1\\
\end{array}
\right), 
 \kappa(y, x) = tr(L_{y}L_{x})=\alpha \beta
 \]
If $\alpha \beta = 0$, then the Killing form on $\A$ is trivial.  If $\alpha\beta \neq 0$, $\A^{\perp} = \Span{\alpha_{2}a^{2}+\alpha_{3}a^{3}}=\A^{2}=Leib(\A)$. It is clear that $\A$ is solvable with $rad(\A) = \A$; hence $\A$ is not semisimple. 

\subsection{Multiplication of Type $aa^3 = a^2 + \gamma a^3$}

 Let $x=\alpha a +b$, $y=\beta a +c$, $x, y \in \A$ and $b, c \in \A^{2}$. Then\\
\[
L_{x}=\alpha
\left(
\begin{array}{ccc}
0 & 0 & 0 \\
1 & 0 & 1 \\
0 & 1 & \gamma\\
\end{array}
\right), 
 L_{y}=\beta
\left(
\begin{array}{ccc}
0 & 0 & 0 \\
1 & 0 & 1 \\
0 & 1 & \gamma\\
\end{array}
\right),
 L_{x}L_{y}=\alpha \beta
\left(
\begin{array}{ccc}
0 & 0 & 0 \\
0 & 1 & \gamma \\
1 & \gamma & \gamma^{2} +1 \\
\end{array}
\right), 
 \kappa(y, x) = tr(L_{y}L_{x})=\alpha \beta(\gamma^{2} + 2)
 \]
The Killing form on $\A$ is trivial for $\alpha \beta = 0$.  If $\alpha \beta \neq 0$ and $\gamma \neq \pm \sqrt{2}i$, then $\kappa(y,x)=tr(L_{y}L_{x}) \neq 0$. Thus, $\A^{\perp}=\A^2 =Leib(\A)$. This shows $Leib(\A)=\A^{\perp}$ does not imply that $\A$ is semisimple.  On the other hand, if $\alpha \beta \neq 0$ and $\gamma = \pm \sqrt{2}i$, then $\kappa(y,x)=tr(L_{y}L_{x})=0$. It follows that $\A^{\perp}=\A$ and the Killing form on $\A$ is trivial. This shows that $rad(\A)=\A^{\perp}$ does not imply that $\A$ is semisimple.

\section*{Acknowledgements}
The authors are graduate students at North Carolina State University. Their work was funded by an REG grant from the National Science Foundation. The authors would like to thank Dr. Ernest Stitzinger for his guidance and support.

\bibliography{main}{}
\bibliographystyle{plain}

\end{document}